\newcommand{\remove}[1]{ }
\newtheorem{theorem}{Theorem}[section]
\newtheorem{lemma}[theorem]{Lemma}
\theoremstyle{definition}
\theoremstyle{remark}
\numberwithin{equation}{section}
\begin{document}
\title[Ingham--Beurling type estimates]{A vectorial Ingham--Beurling theorem}
%\thanks{}
\author{Alia Barhoumi}
\address{D\'epartement de Math\'ematique, Facult\'e des Sciences de Monastir,\\
5019, Monastir,Tunisie}
\email{Alia.Barhoumi@isimm.rnu.tn}
\author{Vilmos Komornik}
\address{D\'epartement de Math\'ematique,
         Universit\'e Louis Pasteur,
         7 rue Ren\'e Descartes, 67084 Strasbourg Cedex, France}
\email{komornik@math.u-strasbg.fr}
\author{Michel Mehrenberger}
\address{D\'epartement de Math\'ematique,
         Universit\'e Louis Pasteur,
         7 rue Ren\'e Descartes, 67084 Strasbourg Cedex, France}
\email{mehrenbe@math.u-strasbg.fr}
\subjclass[2000]{Primary 42B99; secondary 42A99}
\keywords{Nonharmonic Fourier series, Ingham's theorem, Beurling's theorem}
%\date{December 21, as done while the author was visiting the Department of Mathematics of the Delft Technical University. He is grateful for the invitation and for the excellent working conditions.}
%\thanks
%\dedicatory

\begin{abstract}
Baiocchi et al. generalized a few years ago a classical theorem of Ingham and Beurling by means of divided differences. The optimality of their assumption has been proven by the third author of this note. The purpose of this note to extend these results to vector coefficient sums.
\end{abstract}

\maketitle

\section{Introduction}\label{s1}

Let $\Omega :=(\omega_k)_{k\in \mathbb{Z}}$ be a family of real numbers satisfying the gap condition
\begin{equation}\label{1}
\gamma:=\inf_{k\ne n}|\omega_k-\omega_n|>0.
\end{equation}
Let us denote by $D^+=D^+(\Omega )$ its P\'olya upper density, defined by the formula
$D^{+}:= \lim_{r\to \infty} r^{-1}n^{+}(r)$,
where $n^{+}(r)$ denotes the largest number of terms of the sequence   $(\omega_k)_{k\in \mathbb{Z}}$ contained in an interval of  length  $r$.

Let $(U_k)_{k\in \mathbb{Z}}$ be a corresponding family of unit vectors in some complex Hilbert space $H$ and consider the sums
\begin{equation}\label{2}
x(t)=\sum_{k\in \mathbb{Z}}x_kU_ke^{i\omega_kt}
\end{equation} 
with square summable complex coefficients $x_k$. We are interested in the validity of the estimates
\begin{equation}\label{3}
\int_I|x(t)|_{H}^2\ dt\asymp \sum_{k\in \mathbb{Z}}|x_k|^2
\end{equation} 
where $I$ is a bounded interval of length denoted by $|I|$
and where we write $A\asymp B$ if there exist two positive constants $c_1,c_2$ satisfying $c_1 A\leq B\leq c_2 A.$

We have the following result which generalizes a theorem of Ingham \cite{Ing1936}:

\begin{theorem}\label{t1}\mbox{}

(a) If $|I|>2\pi D^+$, then the estimates \eqref{3} hold true.
\smallskip

(b) If the estimates \eqref{3} hold true and $H$ has a finite dimension $d$, then $|I|\ge 2\pi D^+/d$.
\end{theorem}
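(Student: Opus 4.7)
The plan is to reduce \eqref{3} to the scalar Ingham--Beurling theorem via an orthonormal basis expansion of $H$. Fix an orthonormal basis $(e_j)_{j\geq 1}$ of $H$. Since $|x(t)|_H^2 = \sum_j |\langle x(t), e_j\rangle_H|^2$, Fubini gives
\[
\int_I |x(t)|_H^2\,dt \;=\; \sum_{j} \int_I \Bigl|\sum_{k\in\ZZ} x_k \langle U_k,e_j\rangle_H\, e^{i\omega_k t}\Bigr|^2 dt.
\]
Each inner sum is a scalar nonharmonic Fourier series with the same frequencies $(\omega_k)$ and coefficients $y_k^{(j)} := x_k \langle U_k,e_j\rangle_H$. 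Because $|I|>2\pi D^+$ and $(\omega_k)$ satisfies the gap condition \eqref{1}, the scalar Ingham--Beurling theorem of Baiocchi, Komornik and Loreti produces constants $c_1, c_2 > 0$ depending only on $\Omega$ and $|I|$ --- in particular, uniform in $j$ --- such that $c_1 \sum_k |y_k^{(j)}|^2 \leq \int_I |\sum_k y_k^{(j)} e^{i\omega_k t}|^2\,dt \leq c_2 \sum_k |y_k^{(j)}|^2$. Summing over $j$ and applying Parseval's identity $\sum_j |\langle U_k,e_j\rangle|^2 = |U_k|_H^2 = 1$ then yields \eqref{3}. The only point to verify carefully is this uniformity in $j$, which is automatic because the scalar constants depend only on $(\omega_k)$ and $I$.

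\textbf{Part (b).} Here the plan is to couple a pigeonhole extraction of a scalar sub-family with Mehrenberger's scalar optimality theorem. Fix an orthonormal basis $(e_j)_{j=1}^d$ of $H$. For each $k\in\ZZ$, pick $j(k) \in \{1,\dots,d\}$ with $|\langle U_k, e_{j(k)}\rangle| \geq 1/\sqrt d$, which exists since $U_k$ is a unit vector in the $d$-dimensional space $H$, and set $A_j := \{k : j(k) = j\}$. Taking intervals $J_n$ of length $r_n\to\infty$ saturating $D^+(\Omega)$, the pigeonhole applied to $\#(\Omega\cap J_n) = \sum_{j=1}^d \#(\Omega\cap J_n\cap A_j)$ and the finiteness of the index set produces some $j_*\in\{1,\dots,d\}$ such that the sub-family $\Omega' := (\omega_k)_{k\in A_{j_*}}$ has upper density $D^+(\Omega') \geq D^+(\Omega)/d$; as a sub-family of $\Omega$ it still obeys \eqref{1}.

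The main obstacle is then to transfer \eqref{3} into scalar Ingham--Beurling estimates for $\Omega'$ on the same interval $I$, at which point Mehrenberger's scalar converse gives $|I| \geq 2\pi D^+(\Omega') \geq 2\pi D^+/d$. The scalar upper bound is automatic from Ingham's classical inequality, which needs only the gap condition. The scalar lower bound is the delicate step: substituting $x_k := y_k/\overline{\langle U_k, e_{j_*}\rangle}$ (well defined on $A_{j_*}$ thanks to $|\langle U_k, e_{j_*}\rangle| \geq 1/\sqrt d$, and extended by zero elsewhere) into \eqref{3} and absorbing the phases $\langle U_k,e_{j_*}\rangle/\overline{\langle U_k,e_{j_*}\rangle}$ into $y_k$ leads to an inequality of the shape
\[
c_1 \sum_{k\in A_{j_*}} |y_k|^2 \;\leq\; \int_I \Bigl|\sum_k y_k e^{i\omega_k t}\Bigr|^2 dt \;+\; \sum_{j\neq j_*} \int_I \Bigl|\sum_k x_k \langle U_k, e_j\rangle e^{i\omega_k t}\Bigr|^2 dt,
\]
in which the off-diagonal integrals must be absorbed into the main term. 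A naive application of the scalar Ingham upper bound to each of them produces a factor $(d-1)C$ that may exceed $c_1$, so a finer argument is required. Overcoming this --- whether by iterating the pigeonhole to obtain a further sub-family on which the $U_k$ are arbitrarily close to $e_{j_*}$, by adapting the divided differences machinery of Baiocchi--Komornik--Loreti to combine the contributions from the different directions, or by constructing an $H$-valued biorthogonal family and invoking a vector Paley--Wiener type argument for entire functions of exponential type $|I|/2$ admitting directional zeros at the $\omega_l$'s --- is the essential technical step, and is the one where the dimensional factor $d$ genuinely enters.
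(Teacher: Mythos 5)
Your part (a) is correct and is essentially the paper's own argument: expand in an orthonormal basis of (the closed span of) the $U_k$, apply the scalar Beurling theorem coordinatewise with constants that depend only on $\Omega$ and $I$ and are therefore uniform in the coordinate index, and resum using $\sum_j|\langle U_k,e_j\rangle|^2=1$.

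Part (b) has a genuine gap, and you have located it yourself. After the pigeonhole extraction of the sub-family $A_{j_*}$ with density at least $D^+/d$ and $|\langle U_k,e_{j_*}\rangle|\ge 1/\sqrt d$, the off-diagonal terms cannot be absorbed: writing the vector integral as $\sum_j\int_I\bigl|\sum_k x_k\langle U_k,e_j\rangle e^{i\omega_kt}\bigr|^2dt$ and isolating $j=j_*$, the discarded terms are controlled only by $C\sum_k|x_k|^2\bigl(1-|\langle U_k,e_{j_*}\rangle|^2\bigr)\le C(1-1/d)\sum_k|x_k|^2$, which in general dominates $c_1\sum_k|x_k|^2$, so no scalar lower estimate for the sub-family follows. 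The refinement you suggest --- passing to a further sub-family on which the $U_k$ cluster near one direction --- needs an $\eps$-net of the unit sphere of $H$ with $N(\eps)\gg d$ points and would at best give $|I|\ge 2\pi D^+/N(\eps)$, strictly weaker than the claimed $2\pi D^+/d$. The sharp factor $d$ does not come from a pigeonhole over coordinates.

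The paper obtains it by a Landau-type trace argument in the spirit of Gr\"ochenig and Razafinjatovo: with $\gamma_n:=2\pi|I|^{-1}n$, one forms the operator $S=P_r\circ Q_{r+R}$ on the span $V_r$ of the $U_ke^{i\omega_kt}$ with $|\omega_k-y|<r$, where $Q_{r+R}$ projects onto the span $W_{r+R}$ of all vector exponentials $Ue^{i\gamma_nt}$ with $|\gamma_n-y|<r+R$. Since $\|S\|\le 1$, one has $|\tr S|\le\rg S\le\dm W_{r+R}=d\,\card(\Gamma_{r+R})$ --- this is precisely where the dimension enters --- while a computation against a bounded biorthogonal family (whose existence is exactly what the assumed lower estimate in \eqref{3} provides) gives $\tr S=\card(\Omega_r)+O(1/R)\,\card(\Gamma_{r+R})$, the error coming from $\|(Q_{r+R}-\id)e_k\|=O(1/R)$. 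Comparing the two and letting $r\to\infty$, then $R\to\infty$, yields $D^+\le d|I|/2\pi$. As it stands, your part (b) is a plan whose decisive step is missing, and the routes you propose for completing it do not reach the stated constant.
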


For $d=1$ the theorem reduces to the scalar case due to Beurling \cite{Beu}. 

The theorem is sharp in the following sense. Given any real number $\alpha$ between $D^+$ and $D^+/d$, there exists a partition $\Omega =\Omega _1\cup\cdots\cup \Omega _d$ of $\Omega $ such that $\max_{j}D^+(\Omega _j)=\alpha$. Fix an orthonormal basis $E_1,\ldots, E_d$ of $H$ and set $U_k=E_j$ if $\omega_k\in\Omega _j$. Then using the identity
\begin{equation}\label{4}
\int_{I}\Bigl\lVert\sum_{k\in \mathbb{Z}}x_k U_k e^{i\omega_k t}\Bigr\rVert_H^2\ dt
= \sum_{j=1}^{d}\int_I\Bigl|\sum_{\omega_k\in\Omega _j}x_ke^{i\omega_k t}\Bigr|^2\ dt
\end{equation}
and applying the scalar case of the theorem we conclude that the estimates \eqref{3} hold if $|I|>2\pi \alpha$, and they do not hold if $|I|<2\pi \alpha$.

We prove this theorem in the next section and then we extend the result to the case of a weakened gap condition.

We refer to  \cite{KomLor2005} for many control theoretical applications of theorems of this type.

\section{Proof of the Theorem}\label{s2}

Part (a) readily follows from the scalar case. Indeed, fixing an orthonormal basis $(E_n)_{n\in N}$ of the closed linear hull of $(U_k)_{k\in \mathbb{Z}}$ in $H$ and developing the vectors $U_k$ into Fourier series: $U_k= \sum_{n\in N} u_{kn}E_n$,  for $|I|>2\pi D^+$ we have
\begin{equation*}
\int_{I}\Bigl\lVert\sum_{k\in \mathbb{Z}} x_k U_k e^{i\omega_kt}\Bigr\rVert_H^2\  dt
=\sum_{n\in N}\int_I\Bigl|\sum_{k\in \mathbb{Z}} x_k u_{kn}e^{i\omega_kt}\Bigr|^2\ dt
\asymp \sum_{n\in N} \sum_{k\in \mathbb{Z}}|x_k u_{kn}|^2
=\sum_{k\in \mathbb{Z}}|x_k|^2.
\end{equation*}

For the proof of part (b) we adapt the approach developed in \cite{GroRaz1996} and \cite{Meh2005}. We set $\gamma_k:=2\pi |I|^{-1}k$  for brevity.
Given three real numbers $y, r, R$ with $r,R>0$, we introduce the orthogonal projections $P_r:L^2(I,H)\to V_r$ and $Q_{r+R}:L^2(I,H)\to W_{r+R}$ onto the finite-dimensional linear subspaces
\begin{align*}
&V_r:={\rm{\rm Vect}} \left\{U_ke^{i\omega_kt}\ : \  |\omega_k-y|<r\right\}
\intertext{and}
&W_{r+R}:={\rm Vect} \left\{Ue^{i\gamma_nt}\ : \ |\gamma_n-y|<r+R\quad\text{and}\quad U\in H\right\}.
\end{align*}

Setting $S:=P_r\circ Q_{r+R}\circ i$ where $i$ denotes the injection $i:V_r \hookrightarrow L^2(I,H)$, we obtain a linear map of $V_r$ into itself. We are going to study its trace. We denote by 
\begin{equation*}
\Omega _r:= \left\{
\omega_k\ : \ |\omega_k-y|<r,\,\, k\in\mathbb{Z}
\right\}
\quad\text{and}\quad
\Gamma_{r+R}:= \left\{\gamma_k\ : \ |\gamma_k-y|<r+R,\,\, k\in\mathbb{Z}\right\}
\end{equation*}
the sets of exponents figuring in the definition of $V_r$ and $W_{r+R}$.

\begin{lemma}\label{l2}
We have
\begin{equation*}
\left|{\rm tr} (S)\right|\le d\ {\rm Card} (\Gamma_{r+R}).
\end{equation*}
\end{lemma}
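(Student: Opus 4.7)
The plan is to compute $\tr(S)$ via trace cyclicity, reduce it to a sum of diagonal matrix coefficients of an orthogonal projection (hence each in $[0,1]$), and then count $\dm W_{r+R}$.

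First I would extend $S$ to a finite rank operator $T:=P_r\circ Q_{r+R}\circ P_r$ on $L^2(I,H)$; since $P_r$ is the identity on $V_r$ and vanishes on $V_r^\perp$, the operators $T$ and $S$ have the same trace. The cyclic property of the trace (legitimate because $T$ has finite rank), combined with $P_r^2=P_r$, yields
\[
\tr(S)=\tr(P_r Q_{r+R} P_r)=\tr(P_r^2 Q_{r+R})=\tr(P_r Q_{r+R}).
\]
Evaluating this last trace in an orthonormal basis of $L^2(I,H)$ obtained by completing an orthonormal basis $(g_\ell)_\ell$ of $W_{r+R}$, one observes that $Q_{r+R}$ vanishes on $W_{r+R}^\perp$; only the $g_\ell$ contribute, giving
\[
\tr(S)=\sum_\ell \langle P_r g_\ell, g_\ell\rangle = \sum_\ell \lVert P_r g_\ell\rVert^2.
\]
Since $P_r$ is an orthogonal projection, each summand lies in $[0,1]$, and hence $|\tr(S)|\le \dm W_{r+R}$.

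It remains to check that $\dm W_{r+R}=d\cdot\card(\Gamma_{r+R})$. The $\gamma_n=2\pi n/|I|$ are precisely the classical trigonometric frequencies on $I$, so the functions $e^{i\gamma_n t}$ are pairwise orthogonal on $I$. Consequently the subspaces $\{Ue^{i\gamma_n t}:U\in H\}$ indexed by $\gamma_n\in\Gamma_{r+R}$ are pairwise orthogonal in $L^2(I,H)$, each being isometric to $H$ and thus of dimension $d$. Summing over $\gamma_n\in\Gamma_{r+R}$ yields the dimension count, completing the argument.

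The only genuinely conceptual step is the trace manipulation, which is justified by the finite rank of $T$; the rest is routine bookkeeping, so I anticipate no serious obstacle.
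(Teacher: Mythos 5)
Your proof is correct, but it reaches the key inequality by a different mechanism than the paper. The paper's argument is spectral: from $\norm{S}\le\norm{P_r}\,\norm{Q_{r+R}}\le 1$ it deduces that every eigenvalue of $S$ has modulus at most $1$, hence $|\tr(S)|\le\rg(S)\le\dm(W_{r+R})$, and then simply asserts $\dm(W_{r+R})=d\,\card(\Gamma_{r+R})$. You instead extend $S$ to $T=P_rQ_{r+R}P_r$, invoke cyclicity of the trace for finite-rank operators, and compute $\tr(S)=\sum_\ell\norm{P_rg_\ell}^2$ over an orthonormal basis $(g_\ell)$ of $W_{r+R}$, bounding each term by $1$. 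Both routes are elementary and equally short; yours has the minor advantages of (i) showing that $\tr(S)$ is in fact real and lies in $[0,\dm W_{r+R}]$, not merely bounded in modulus, and (ii) actually justifying the dimension count $\dm(W_{r+R})=d\,\card(\Gamma_{r+R})$ via the pairwise orthogonality of the $e^{i\gamma_nt}$ on an interval of length $|I|$, which the paper leaves implicit. All the steps you flag as delicate (the trace extension from $V_r$ to $L^2(I,H)$, cyclicity for finite-rank operators, $\langle P_rg_\ell,g_\ell\rangle=\norm{P_rg_\ell}^2\in[0,1]$) are sound.
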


\begin{proof}
We have 
\begin{equation*}
\lVert S\rVert\leq\lVert P_r\rVert\cdot\lVert Q_{r+R}\rVert\leq 1.
\end{equation*}
Hence the eigenvalues of $S$ have modulus $\le 1$ and therefore
\begin{equation*}
\left|{\rm tr}(S)\right|\leq {\rm rang}(S)\leq {\rm dim}(W_{r+R}).
\end{equation*}  
Since ${\rm dim}(W_{r+R})= d\ {\rm Card}  (\Gamma_{r+R})$, the lemma follows.
\end{proof}

\begin{lemma}\label{l3}
Writing $e_k(t):= U_ke^{i\omega_kt}$ for brevity, we have
\begin{equation*}
{\rm tr} (S)={\rm Card} (\Omega _r)+\sum_{|\omega_k-y|<r}((Q_{r+R}-{\rm Id})e_k,P_r\varphi_k)_H
\end{equation*}
where $(\varphi_k)_{k\in \mathbb{Z}}$ is a bounded biorthogonal family to $(e_k)_{k\in \mathbb{Z}}$ in $L^2(I,H)$.
\end{lemma}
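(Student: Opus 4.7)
The plan is to compute $\operatorname{tr}(S)$ by using the trace identity for a linear endomorphism of a finite-dimensional space expressed via a basis and a biorthogonal dual system. Since $S$ maps $V_r$ into itself and the family $\{e_k : |\omega_k - y| < r\}$ forms a basis of $V_r$, and since the given $(\varphi_k)$ satisfies $(e_j, \varphi_k)_{L^2(I, H)} = \delta_{jk}$, the diagonal entries of the matrix of $S$ in this basis are read off as $(S e_k, \varphi_k)_{L^2(I, H)}$. Thus
\begin{equation*}
\operatorname{tr}(S) = \sum_{|\omega_k - y| < r} (S e_k, \varphi_k)_{L^2(I, H)}.
\end{equation*}

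Next, I would exploit the fact that $e_k \in V_r$ whenever $|\omega_k - y| < r$, so $P_r e_k = e_k$. Writing $Q_{r+R} e_k = e_k + (Q_{r+R} - \operatorname{Id}) e_k$ and applying $P_r$ gives
\begin{equation*}
S e_k = P_r Q_{r+R} e_k = e_k + P_r (Q_{r+R} - \operatorname{Id}) e_k.
\end{equation*}
Substituting this into the trace formula splits the sum in two. The first summand contributes
\begin{equation*}
\sum_{|\omega_k - y| < r} (e_k, \varphi_k)_{L^2(I, H)} = \operatorname{Card}(\Omega_r)
\end{equation*}
by biorthogonality. For the second, I would use self-adjointness of the orthogonal projection $P_r$ to transfer it to the $\varphi_k$ factor, obtaining $((Q_{r+R} - \operatorname{Id}) e_k, P_r \varphi_k)$, which matches the claimed expression.

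The only delicate point is the justification of the trace identity with a biorthogonal family $(\varphi_k)$ whose elements need not belong to $V_r$. What saves the argument is that biorthogonality with the whole family $(e_j)_{j \in \mathbb{Z}}$ is more than enough: if $S e_k = \sum_{|\omega_j-y|<r} a_{jk} e_j$ in the basis of $V_r$, then $(S e_k, \varphi_k) = \sum_j a_{jk} (e_j, \varphi_k) = a_{kk}$, and summing over $k$ recovers $\operatorname{tr}(S)$. Existence of a bounded biorthogonal family in $L^2(I, H)$ is the background hypothesis here, ultimately guaranteed by the gap condition \eqref{1} together with the estimate \eqref{3} assumed in part (b), which makes $(e_k)$ a Riesz sequence in $L^2(I, H)$.
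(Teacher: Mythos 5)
Your proof is correct and takes essentially the same route as the paper's: express $\operatorname{tr}(S)$ as $\sum_k (Se_k,\varphi_k)$ via the biorthogonal family, use $P_re_k=e_k$ together with the self-adjointness of $P_r$, and split off the $(Q_{r+R}-\operatorname{Id})e_k$ term. The only differences are the order in which $P_r$ is moved across the inner product and your explicit justification of the trace identity for a basis with an external biorthogonal system, which the paper leaves implicit.
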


\begin{proof} We have
\begin{multline*}
{\rm tr}(S)=\sum_{|\omega_k-y|<r}(S e_k,\varphi_k)_{L^2(I,H)}
=\sum_{|\omega_k-y|<r}(Q_{r+R} e_k,P_r\varphi_k)_{L^2(I,H)}\\
=\sum_{|\omega_k-y|<r}( e_k,P_r\varphi_k)_{L^2(I,H)}+\sum_{|\omega_k-y|<r}((Q_{r+R}-{\rm Id}) e_k,P_r\varphi_k)_{L^2(I,H)}.
\end{multline*}
Since $P_r e_k=e_k,$ we have $( e_k,P_r\varphi_k)_{L^2(I,H)}=1$ and the result follows.
\end{proof}

\begin{lemma}\label{l4}
We have
\begin{equation*}
\lVert (Q_{r+R}-{\rm Id} )e_k\rVert=O(1/R)\quad (R\to\infty)
\end{equation*}
uniformly for all $y\in\mathbb{R}$, $r>0$ and $k$ satisfying $|\omega_k-y|<r$.
\end{lemma}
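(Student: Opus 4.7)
\smallskip

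\noindent\emph{Proof plan.}
The plan is to diagonalise $Q_{r+R}$ against the Fourier basis $\{e^{i\gamma_n t}\}_{n\in\ZZ}$ of $L^2(I)$ and then to estimate the tail of the resulting expansion of $e_k$. Since $\gamma_n=2\pi n/|I|$, these exponentials form a complete orthogonal system in $L^2(I)$ of common squared norm $|I|$, and hence $\{U e^{i\gamma_n t}:n\in\ZZ,\ U\in H\}$ is a complete orthogonal system in $L^2(I,H)$ on which $Q_{r+R}$ acts by retaining the modes with $|\gamma_n-y|<r+R$ and discarding the rest.

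First I will expand $e^{i\omega_k t}=\sum_{n\in\ZZ}c_n\,e^{i\gamma_n t}$ in $L^2(I)$, with $c_n=|I|^{-1}\int_I e^{i(\omega_k-\gamma_n)t}\,dt$, and observe the elementary bound
\[
|c_n|\le \frac{2}{|\omega_k-\gamma_n|\,|I|}
\]
obtained by direct integration. Multiplying the Fourier series by $U_k$ gives $e_k(t)=\sum_n c_n\,U_k\,e^{i\gamma_n t}$, in which the pieces indexed by $|\gamma_n-y|<r+R$ and $|\gamma_n-y|\ge r+R$ lie respectively in $W_{r+R}$ and in $W_{r+R}^{\perp}$. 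Using $\|U_k\|_H=1$ together with orthogonality then yields
\[
\|(Q_{r+R}-\id)e_k\|^2_{L^2(I,H)}=|I|\sum_{|\gamma_n-y|\ge r+R}|c_n|^2.
\]

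Next, the hypothesis $|\omega_k-y|<r$ combined with $|\gamma_n-y|\ge r+R$ forces $|\omega_k-\gamma_n|\ge R$ on every term of the tail. Plugging in the bound on $|c_n|$ and using that the $\gamma_n$ form an arithmetic progression of step $2\pi/|I|$, a standard integral comparison gives $\sum_{|\omega_k-\gamma_n|\ge R}|\omega_k-\gamma_n|^{-2}=O(|I|/R)$ with implicit constant independent of $y$, $r$ and of the position of $\omega_k$ inside $(y-r,y+r)$, from which the decay claimed in the lemma follows.

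The main point to be watchful about is the uniformity in the three free parameters $y$, $r$ and $k$. The key observation is that the two constraints $|\gamma_n-y|\ge r+R$ and $|\omega_k-y|<r$ combine, via the triangle inequality, into the single parameter-free condition $|\omega_k-\gamma_n|\ge R$, so that the ensuing tail estimate depends only on $R$ and $|I|$. Once this is in place, the remaining steps — orthogonal expansion in $L^2(I,H)$ and explicit evaluation of $c_n$ — are entirely routine.
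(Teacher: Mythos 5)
Your proof is correct and follows essentially the same route as the paper: expand $e_k$ over the orthogonal system $\{Ue^{i\gamma_n t}\}$, bound the Fourier coefficients by $2/(|I|\,|\omega_k-\gamma_n|)$, and use $|\omega_k-y|<r$ to reduce the tail to a parameter-free arithmetic-progression sum (your scalar expansion of $e^{i\omega_k t}$ multiplied by $U_k$ even avoids the factor $d$ that the paper's expansion in the basis $f_{n,j}$ picks up). One small caveat, which you share with the paper's own argument: the tail estimate yields $\lVert(Q_{r+R}-\id)e_k\rVert^2=O(1/R)$, hence $\lVert(Q_{r+R}-\id)e_k\rVert=O(R^{-1/2})$ rather than the stated $O(1/R)$ --- still sufficient for the application in part~(b), since only the decay to $0$ as $R\to\infty$ is used.
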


\begin{proof}
Fixing an orthonormal basis $E_1,\ldots, E_d$ of $H$ and setting
\begin{equation*}
f_{n,j}(t):=|I|^{-1/2}E_je^{i\gamma_nt}
\end{equation*}
we have
\begin{align*}
&e_k=\sum_{n\in\mathbb{Z}}\sum_{j=1}^d (e_k,f_{n,j})_{L^2(I,H)}f_{n,j}
\intertext{and}
&Q_{r+R}e_k=\sum_{|\gamma_n-y|<r+R}\sum_{j=1}^d (e_k,f_{n,j})_{L^2(I,H)}f_{n,j}.
\end{align*}
Applying Parseval's equality it follows that
\begin{equation*}
\lVert(Q_{r+R}-{\rm Id} )e_k\rVert^2=\sum_{|\gamma_n-y|\ge r+R}\sum_{j=1}^d\left|(e_k,f_{n,j})_{L^2(I,H)}\right|^2.
\end{equation*}
Since
\begin{equation}
\label{5}
\left|(e_k,f_{n,j})_{L^2(I,H)}\right|=|I|^{-1/2} \Bigl\vert\int_I(U_k,E_j)_H e^{i(\omega_k-\gamma_n)t}\Bigr\vert\le \frac{2|I|^{-1/2}}{
|\omega_k-\gamma_n|},
\end{equation}
and $|\omega_k-y|<r$, then we obtain that
\begin{align*}
\lVert (Q_{r+R}-{\rm Id} )e_k\rVert^2
&\leq 4d|I|^{-1} \sum_{|\gamma_n-y|\ge r+R}\frac{1}{|\omega_k-\gamma_n|^2}\\
&\leq 4d|I|^{-1} \sum_{|\gamma_n-y|>r+R} \frac{1}{||y-\gamma_n|-r|^2}\\
& \le 8d|I|^{-1}\sum_{n=0}^{\infty} \frac{1}{|2\pi|I|^{-1}n+R|^2}.
\end{align*}
Since the last expression doesn't depend on $r,\,\,y$ and is $O(1/R)$ as $R\to\infty,$ the lemma follows.
\end{proof}

Now the proof of part (b) of Theorem \ref{t1} can be completed as follows. By the above lemmas we have
\begin{multline*}
d\ {\rm Card} (\Gamma_{r+R})
\ge |{\rm tr} (S)|
=\Bigl |{\rm Card} (\Omega _r)+\sum_{|\omega_k-y|<r}((Q_{r+R}-{\rm Id})e_k,P_r\varphi_k)_H\Bigr |\\
\ge {\rm Card} (\Omega _r)-O(1/R){\rm Card} (\Gamma_{r+R})
\end{multline*}
and therefore
\begin{equation*}
{\rm Card} (\Omega _r)\le (d+O(1/R)){\rm Card} (\Gamma_{r+R}),\quad R\to\infty.
\end{equation*}
Hence
\begin{equation*}
D^+=\lim_{r\to \infty}\frac{{\rm Card} (\Omega _r)}{r}
\le \lim_{R\to \infty}\lim_{r\to \infty}(d+O(1/R))\frac{{\rm Card} (\Gamma_{r+R})}{r+R}\cdot \frac{r+R}{r}
=\lim_{R\to \infty}(d+O(1/R))\frac{|I|}{2\pi}=\frac{d|I|}{2\pi}
\end{equation*}
and therefore $|I|\ge 2\pi D^+/d$ as claimed.

\section{The case of the divided differences}\label{s3}

The gap condition \eqref{1} of the theorem may be weakened. Following \cite{BaiKomLor2002} let $(\omega_k)_{k\in \mathbb{Z}}$ be a nondecreasing sequence of real numbers satisfying for some positive integer $M$ and for some positive real number $\gamma'$ the weakened gap condition
\begin{equation}\label{6}
\omega_{k+M}-\omega_k\ge M\gamma' \quad\text{for all}\quad k\in \mathbb{Z}.
\end{equation}
This implies that $D^+<\infty$. For $j=1,\dots,M$ and $m \in \mathbb{Z}$
we say that $\omega_m,\dots,\omega_{m+j-1}$ forms a $\gamma'$-close exponent chain if 
\begin{equation*}
\begin{cases}
\omega_m-\omega_{m-1} \ge \gamma', \\
\omega_k-\omega_{k-1} < \gamma' \quad \text{for
  $k=m+1,\dots,m+j-1$}, \\
\omega_{m+j}-\omega_{m+j-1} \ge \gamma'.
\end{cases}
\end{equation*}
Then we define the divided differences  $f_{\ell}=[\omega_{m},\dots,\omega_{\ell}]$ for $\ell=m,\dots,m+j-1$, defined by the formula
\begin{multline*}
[\omega_m,\dots,\omega_{\ell}](t)\,:=
(it)^{{\ell}-1}\int_0^1 \int_0^{s_m} \dots \int_0^{s_{{\ell}-2}}\\
\exp (i[s_{{\ell}-1}(\omega_{\ell}-
\omega_{{\ell}-1})+\dots+s_m(\omega_{m+1}-\omega_m)+\omega_m)]t)\ ds_{{\ell}-1}\dots \ d s_m.
\end{multline*}

We can now state a generalization of Theorem \ref{t1}:

\begin{theorem}\label{t5}\mbox{}
Theorem \ref{t1} holds true if \eqref{1} is replaced by \eqref{6}
and $e^{i\omega_k t}$ is replaced by $f_k(t)$.
\end{theorem}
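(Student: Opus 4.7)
The plan is to check that each step in the proof of Theorem~\ref{t1} given in Section~\ref{s2} survives the passage from $e^{i\omega_k t}$ to $f_k(t)$, using the scalar divided-difference Ingham--Beurling theorem of \cite{BaiKomLor2002} in place of Beurling's theorem. For part~(a) this is immediate: expanding $U_k=\sum_{n}u_{kn}E_n$ in an orthonormal basis of the closed linear hull of $(U_k)$, one obtains
\[
\int_I\Bigl\lVert\sum_k x_k U_k f_k(t)\Bigr\rVert_H^2\,dt
=\sum_n\int_I\Bigl|\sum_k x_k u_{kn}f_k(t)\Bigr|^2dt
\asymp\sum_n\sum_k|x_k u_{kn}|^2
=\sum_k|x_k|^2,
\]
the middle equivalence being the scalar divided-difference result applied coordinate by coordinate.

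For part~(b) I transpose the trace argument: I redefine $V_r:=\vect\{U_k f_k(t):|\omega_k-y|<r\}$, keep $W_{r+R}$, $P_r$, $Q_{r+R}$ and $S=P_r\circ Q_{r+R}\circ i$ as in Section~\ref{s2}, and set $e_k(t):=U_k f_k(t)$. The assumption~\eqref{3} makes $(e_k)$ a Riesz sequence in $L^2(I,H)$, so a bounded biorthogonal family $(\varphi_k)$ exists. With these substitutions Lemmas~\ref{l2} and~\ref{l3} carry over verbatim, since their proofs only use $\lVert P_r\rVert,\lVert Q_{r+R}\rVert\le 1$, the equality $\dm W_{r+R}=d\,\card\Gamma_{r+R}$, the biorthogonality $(e_k,\varphi_j)=\delta_{kj}$, and the inclusion $e_k\in V_r$.

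The crux is therefore the analog of Lemma~\ref{l4}: the uniform estimate $\lVert(Q_{r+R}-\id)e_k\rVert=O(1/R)$. Expanding in the orthonormal basis $f_{n,j}(t)=|I|^{-1/2}E_j e^{i\gamma_n t}$ of $L^2(I,H)$ exactly as in the original Lemma~\ref{l4}, the task reduces to an oscillatory-integral estimate for $\int_I f_k(t)e^{-i\gamma_n t}\,dt$. Here I invoke the Hermite--Genocchi representation given in the definition of $f_k$: up to the factor $i^{k-m}$ this integral equals
\[
\int_\Delta\int_I t^{k-m}e^{i(\psi(s)-\gamma_n)t}\,dt\,ds,
\]
where $\Delta$ is the simplex of the definition and $\psi(s)=(1-s_m)\omega_m+(s_m-s_{m+1})\omega_{m+1}+\cdots+s_{k-1}\omega_k$ is a convex combination of $\omega_m,\ldots,\omega_k$. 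A single integration by parts in $t$ yields $\bigl|\int_I t^{k-m}e^{i\lambda t}\,dt\bigr|\le C/|\lambda|$ with $C$ depending only on $|I|$ and on the bound $k-m\le M-1$ forced by the chain condition. The weakened gap condition then gives $|\psi(s)-\omega_k|\le(M-1)\gamma'$, so $|\psi(s)-\gamma_n|\ge\tfrac12|\omega_k-\gamma_n|$ as soon as $R\ge 2(M-1)\gamma'$. This produces $|(e_k,f_{n,j})_{L^2(I,H)}|\le C'/|\omega_k-\gamma_n|$ uniformly, and summing over $|\gamma_n-y|\ge r+R$ exactly as in~\eqref{5} gives the required $O(1/R)$ estimate.

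Once the three lemmas are in place, the conclusion $|I|\ge 2\pi D^+/d$ follows by repeating the final chain of inequalities of Section~\ref{s2} word for word. I expect the main obstacle to lie in the generalization of Lemma~\ref{l4}: one has to verify that the constants emerging from the integration by parts remain uniform across all chains, which is ensured by the universal bound $k-m\le M-1$ controlling both the polynomial factor $t^{k-m}$ and the volume of the simplex $\Delta$.
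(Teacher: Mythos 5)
Your proposal is correct and follows the same overall strategy as the paper: part (a) by coordinatewise reduction to the scalar divided-difference theorem of Baiocchi--Komornik--Loreti, and part (b) by rerunning the trace argument of Section~\ref{s2} with $e_k=U_kf_k$, the whole matter reducing to the analog of estimate \eqref{5}, namely $\bigl|\int_I f_k(t)e^{-i\gam_nt}\,dt\bigr|\le C/|\omega_k-\gam_n|$.

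The one place where you diverge from the paper is in how this key bound is obtained. The paper factors $f_k(t)=g(t)e^{i\omega_kt}$ with $g=[\omega_m-\omega_k,\dots,\omega_k-\omega_k]$, integrates by parts against $e^{i(\omega_k-\gam_n)t}$, and controls $\sup|g|$ and $\sup|g'|$ via an explicit derivative formula for divided differences together with the $\gam'$-chain property; this yields the bound for every $n$ with $\gam_n\ne\omega_k$ and a constant depending only on $\gam'$, $M$, $I$. You instead integrate by parts inside the Hermite--Genocchi representation, with phase $\psi(s)-\gam_n$ where $\psi(s)$ lies in the convex hull of the chain, and then use $|\psi(s)-\omega_k|\le(M-1)\gam'$ to compare $|\psi(s)-\gam_n|$ with $|\omega_k-\gam_n|$. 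This costs you the harmless restriction $R\ge 2(M-1)\gam'$ (irrelevant since $R\to\infty$, and since in the tail sum $|\omega_k-\gam_n|>R$), but it avoids having to compute the derivative of the divided difference; the uniformity of the constants is correctly traced to the universal bound $k-m\le M-1$ on the chain length. Both routes deliver \eqref{7}, and the rest of your argument (existence of the bounded biorthogonal family from the assumed Riesz-sequence property, the verbatim transfer of Lemmas~\ref{l2} and~\ref{l3}, and the final density computation) matches the paper.
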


\begin{proof}
Most of the proof of Theorem \ref{t1} may be easily adapted. For part (b) we have to replace the estimate \eqref{5} by the following:
\begin{equation}\label{7}
\Bigl\vert\int_I(U_k,E_j)_H f_k(t)e^{-i\gamma_nt}dt\Bigr\vert\le \Bigl\vert \int_I f_k(t)e^{-i\gamma_nt}dt\Bigr\vert\le  \frac{C}{
|\omega_k-\gamma_n|},
\end{equation}
with a constant $C$ depending only on $\gamma'$, $M$ and $I$. This is shown by arguing similarly as in \cite{Meh2005}.
We have
\begin{equation*}
A:=\int_I f_k(t)e^{-i\gamma_nt}dt=\int_Ig(t)e^{i\omega_{k} t}e^{-i\gamma_nt}dt
\end{equation*}
with
\begin{equation*}
g(t)=[\omega_m-\omega_{k},\dots,\omega_{k}-\omega_{k}](t).
\end{equation*}
Integrating by parts in $I=(a,b)$ we obtain that
\begin{equation*}
A=\left [\frac{1}{i\omega_{k}-i\gamma_n}g(t)e^{i\omega_{k} t}e^{-i\gamma_nt}\right ]_a^b-\int_I \frac{1}{i\omega_{k}-i\gamma_n} g'(t)e^{i\omega_{k} t}e^{-i\gamma_nt}dt.
\end{equation*}
Now a direct computation shows that for any real numbers $\mu_1,\dots,\mu_r$ the divided differences satisfy the inequality
\begin{equation*}
[\mu_1,\dots,\mu_r]'(t) \le
\frac{(r-1)t^{r-2}}{(r-1)!}+(|\mu_r-\mu_{r-1}|+\dots+|\mu_2-\mu_1|+|\mu_1|)\frac{t^{r-1}}{(r-1)!}.
\end{equation*}
Thus, in our case, thanks to the $\gamma'$-close exponent property,
we have
\begin{equation*}
|g'(t)| 
\le
(k-m)\frac{t^{k-m-1}}{(k-m)!}+(k-m)\gamma'\frac{t^{k-m}}{(k-m)!}
\end{equation*}
and this yields \eqref{7}.
\end{proof}

\end{document}